\newtheorem{theorem}{Theorem}
\newtheorem{corollary}[theorem]{Corollary}
\newtheorem{lemma}[theorem]{Lemma}
\newtheorem{proposition}[theorem]{Proposition}
\newtheorem{remark}[theorem]{Remark}
\theoremstyle{plain}
\theoremstyle{remark}
\begin{document}
\title{On two questions of Nicholson}
\author{Feroz Siddique}

\address{
	{\bf Feroz Siddique} \\
	University of Wisconsin-Eau Claire \\  Barron County, WI-54868, USA.}
\email{E-mail: siddiquf@uwec.edu}

\begin{abstract}
We show that a ring R has stable range one if and only if every left unit lifts modulo every left principal ideal. We also show that a left quasi-morphic ring has stable range one if and only if it is left uniquely generated. Thus we answer in the affirmative the two questions raised  by W. K. Nicholson.
 \\[+2mm]
\subjclassname{ 16U60, 16D50}\\
\keywords{directly-finite, stable range one, left quasi-morphic.}
\end{abstract}

\maketitle
\section{ Introduction}
\bigskip 
\noindent All our rings are associative  with identity and all our modules are left modules, unless specifically mentioned. Let  $U(R)$ denote the group of all units of $R$. \\
In a ring $R$,  an element $x$  is a left (right) unit if there exists $y \in R$ such that $yx =  1(xy = 1)$. We say that a left unit $x$ lifts modulo any left ideal I if whenever  $yx - 1 \in I$ for some $y \in R$, there exists a left unit $u$ in $R$ such that $x-u \in I$.  Similarly we say that a right unit $x'$ lifts modulo any left ideal I if whenever  $x'y' - 1 \in I$ for some $y' \in R$, there exists a right unit $v$ in $R$ such that $x'-v \in I$. \\

Recall that a ring $R$ is called directly finite (or Dedekind finite or Von Neumann finite) if for any two elements $a,b \in R$, $ab = 1 \implies ba = 1$. A module $M$ is called directly finite if $M \cong M \bigoplus N \implies  N=0$. Furthermore, $M$ is a Dedekind finite module if and only if $End(M_R)$ is a Dedekind finite ring. \\

In \cite{Can}, Canfell defines a ring $R$ to be \textit{right uniquely generated} if for any two right principal ideals $aR$ and $bR$ in $R,$ if  $aR = bR$   then  $a =bu$ for some $u \in U(R)$. The notion of \textit{left uniquely generated} ring is defined similarly. This concept of principal left (right) ideals being uniquely generated first appeared in Kaplansky's classic paper \cite{Ka}. In it, he had raised the question of when a  ring R satisfies the property of being right uniquely generated. \\
A ring $R$ is called \textit{left (right) quasi-morphic} if the collection of all left (right) principal  ideals coincides with the collection of all left (right) annihilators in the ring. For example, any Von Neumann regular ring is both left and right quasi-morphic. Left (right) quasi-morphic were first introduced and studied by Camillo and Nicholson as a generalization of left (right) morphic rings in \cite{NiCa}.  \\

In \cite{Ba1}, Bass first introduced the notion of (left) right stable range for a ring (denoted as $sr(R)$). Vasershtein in \cite{Va1} showed that the notion of stable range one is left-right symmetric. A ring $R$ is said to have stable range one if for any $a,b,x \in R, ax + b = 1 \implies a + by = u \in U(R)$ for some $y \in R$. In the literature of rings with stable range one, many equivalent conditions have been to shown to characterize such a class of rings. We would like to mention one below.\\

In \cite{Gd}, Goodearl stated  the following lemma which is essentially due to Vasershtein. 
\begin{lemma} 
	Let $a,b,c$ be elements of a ring $R$, such that $ab + c = 1$. If there exists $x \in R$ such that $a+cx$ is invertible, then there exists $y \in R$ such that $b+yc$ is invertible. 
\end{lemma}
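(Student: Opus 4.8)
The plan is to pass to the ring $M_2(R)$ of $2\times 2$ matrices over $R$ and argue by elementary row and column operations. The first observation is that the relation $ab+c=1$ is exactly what makes the matrix
\[
N=\begin{pmatrix} a & c\\ -1 & b\end{pmatrix}
\]
a unit of $M_2(R)$: writing $c=1-ab$, one checks the factorization $N=\begin{pmatrix}1 & -a\\ 0 & 1\end{pmatrix}\begin{pmatrix}0 & 1\\ -1 & b\end{pmatrix}$, and both factors are invertible in $M_2(R)$ (the second has inverse $\begin{pmatrix}b & -1\\ 1 & 0\end{pmatrix}$), so $N\in U(M_2(R))$.

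Next I would feed in the hypothesis. Right multiplying $N$ by the elementary matrix $\begin{pmatrix}1 & 0\\ x & 1\end{pmatrix}$ is a column operation that turns the $(1,1)$ entry $a$ into $u:=a+cx$, yielding the still invertible matrix $\begin{pmatrix}u & c\\ bx-1 & b\end{pmatrix}$. Since $u$ is assumed to be a unit of $R$, I can then kill the $(2,1)$ entry by left multiplying by $\begin{pmatrix}1 & 0\\ (1-bx)u^{-1} & 1\end{pmatrix}$, landing at the upper triangular matrix
\[
T=\begin{pmatrix}u & c\\ 0 & b+(1-bx)u^{-1}c\end{pmatrix},
\]
which is again a unit of $M_2(R)$, being a product of units.

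The last step is the elementary remark that an upper triangular matrix $\begin{pmatrix}u & c\\ 0 & e\end{pmatrix}$ over $R$ with $u\in U(R)$ lies in $U(M_2(R))$ if and only if $e\in U(R)$: one direction is the explicit triangular inverse, and for the other the column operation $\begin{pmatrix}u & c\\ 0 & e\end{pmatrix}\begin{pmatrix}1 & -u^{-1}c\\ 0 & 1\end{pmatrix}=\begin{pmatrix}u & 0\\ 0 & e\end{pmatrix}$, followed by scaling the first row by $u^{-1}$, reduces invertibility of $T$ to invertibility of $\begin{pmatrix}1 & 0\\ 0 & e\end{pmatrix}$, which forces $e\in U(R)$. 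Applied to $T$, this gives precisely that $b+yc$ is invertible, with the witness $y:=(1-bx)(a+cx)^{-1}$.

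I do not anticipate a genuine obstacle here. The only place needing mild care — since $R$ need not be commutative, so there is no determinant to invoke — is the equivalence ``$T$ invertible $\iff e$ invertible,'' and that is disposed of by the explicit elementary operations above; everything else is routine bookkeeping with $2\times 2$ matrices. One could of course bypass matrices entirely and just verify by hand that $b+(1-bx)(a+cx)^{-1}c$ has a two-sided inverse, but the matrix formulation is what makes the choice of $y$ transparent.
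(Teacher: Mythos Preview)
Your argument is correct. The factorization of $N$, the two elementary operations producing the upper triangular matrix $T$, and the reduction showing that invertibility of $\begin{pmatrix}1&0\\0&e\end{pmatrix}$ in $M_2(R)$ forces $e\in U(R)$ all check out line by line; the explicit witness $y=(1-bx)(a+cx)^{-1}$ is exactly what drops out.

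As for comparison: the paper does not actually prove this lemma. It is quoted from Goodearl and attributed to Vaserstein, and is used as a black box in the proof of Theorem~3. So there is no ``paper's own proof'' to set yours against. For what it is worth, your $2\times2$ matrix manipulation is essentially the standard Vaserstein-style argument for left--right symmetry of stable range one, so you have in effect reconstructed the source rather than diverged from it. The alternative you mention at the end --- directly verifying that $b+(1-bx)(a+cx)^{-1}c$ has a two-sided inverse --- is also viable and is how some references present it, but as you say the matrix viewpoint is what makes the formula for $y$ appear naturally instead of by inspection.
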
 
 The symmetrical analogue of Vasershtein's Lemma also holds true as we see in the following lemma. The proof follows  similarly to that of Lemma 1. 
 \begin{lemma} 
	Let $a,b,c$ be elements of a ring $R$, such that $ab + c = 1$. If there exists $y \in R$ such that $b+yc$ is invertible, then there exists $x \in R$ such that $a+cx$ is also invertible.
\end{lemma}

\begin{proof}
	Let $ab + c = 1$ and choose $y \in R$ such that $b+yc \in U(R)$. \\
	\noindent Set $u = b+yc \in U(R)$. \\
	Let $v = a + cu^{\scriptsize -1}(1-ya)$ and $w = b + (1-ba)y$. \\
	We prove that $v \in U(R)$. \\
	{
		\setlength{\abovedisplayskip}{1.0pt}
		\setlength{\belowdisplayskip}{2.0pt}
		\begin{alignat*}{3}
		\text{Now,} \hspace{20mm}\\
		bv&= ba+bcu^{\scriptsize -1}(1-ya)  \hspace{35mm}  \hspace{20mm} &&\cdots \hspace{10mm} (i)\\
		\text{And,} \hspace{20mm}\\
		yv&= ya + ycu^{\scriptsize -1}(1-ya)\\
		&= ya + (u-b)u^{\scriptsize -1}(1-ya)\\ 
		&= ya + (1-bu^{\scriptsize -1})(1-ya)\\
		&= 1-bu^{\scriptsize -1}(1-ya) \\
		\text{Then,} \hspace{20mm}\\
		(1-ba)yv&= (1-ba) - (1-ba)bu^{\scriptsize -1}(1-ya)\\
		&= (1-ba) - (b-bab)u^{\scriptsize -1}(1-ya)\\
		&= (1-ba) - b(1-ab)u^{\scriptsize -1}(1-ya) \hspace{5mm} &&\cdots \hspace{10mm} (ii) \\
		\end{alignat*}
	}
	Using (i) and (ii) we get, 
	{
		\setlength{\abovedisplayskip}{2.5pt}
		\setlength{\belowdisplayskip}{2.0pt}
		\begin{alignat*}{3}
		\hspace{15mm}	wv&= (b+(1-ba)y)v \\
		&= bv + (1-ba)yv \\
		&= ba+bcu^{\scriptsize -1}(1-ya)  + (1-ba) - b(1-ab)u^{\scriptsize -1}(1-ya)\\
		&= 1 + (bc - b(1-ab))u^{\scriptsize -1}(1-ya) \\
		&= 1
		\end{alignat*}
	}
	
	Now, observe that \\
	{
		\setlength{\abovedisplayskip}{2.0pt}
		\setlength{\belowdisplayskip}{2.0pt}
		\begin{alignat*}{3}
		aw&= a( b + (1-ba)y)\\
		&= ab + (1-ab)ay \\ 
		&= a b + cay \hspace{55mm} \cdots \hspace{10mm} (iii)\\
		\text{Then,} \hspace{15mm}\\
		(1-ya)w &= w - y(aw) \\
		&= w - y(a b + cay ) \\
		&= b + (1-ba)y - yab - ycay \\
		&= b + y(1-ab) - (b+yc)ay \\
		&= b + yc - uay \\
		&= u(1-ay)  \hspace{30mm} \\
		\text{Then, }\hspace{15mm}\\
		\hspace{10mm}cu^{\scriptsize -1}(1-ya)w &= c(1-ay) \hspace{54mm} \cdots \hspace{10mm} (iv)
		\end{alignat*}
	}
	
	Adding (iii) and (iv) we get, 
	{
		\setlength{\abovedisplayskip}{2.0pt}
		\setlength{\belowdisplayskip}{2.0pt}
		\begin{alignat*}{3}
		&aw+ cu^{\scriptsize -1}(1-ya)w &&= ab +cay + c(1-ay) =1
		\end{alignat*}
	}
	Therefore, 	$(a + cu^{\scriptsize -1}(1-ya))w= vw = 1$ and consequently,  \\
	\begin{center}
		$a + cu^{\scriptsize -1}(1-ya) \in U(R)$
	\end{center}
\end{proof}

 The study of rings satisfying stable range conditions and its consequences has generated a lot of interest in the past. 
 Bass showed that every semi-local ring $R$ has stable range one. This was later generalized independently by Fuchs \cite{Fu2}, Kaplansky \cite{Ka2} and Henriksen \cite{He} where they showed that every unit-regular ring has stable range one. 

\bigskip

\noindent  Nicholson \cite{Nicholson} asked the  following two questions:

\bigskip

\begin{enumerate}
\item If every left unit lifts modulo every left ideal, is the ring $R$ directly finite?

\bigskip

\item If a ring $R$ is left quasi morphic and left uniquely generated, does $R$ have stable range one?\\
\end{enumerate}

\noindent \noindent The problem of lifting of units modulo an ideal has been studied on many instances. For example,  in \cite{menmon1} Menal and Moncasi showed that for a  regular right self injective ring of Type III, every unit lifts modulo every 2-sided ideal. Nicholson's first question deal with when every left unit lifts modulo every left ideal. In this paper we answer both the questions affirmatively and in fact,  prove a stronger version of the above questions.\\
 
 %

\section{Main Results}
\noindent We begin with the following lemma.. 

\begin{lemma}
Let R be a ring. Then $R$ is directly finite if any of the following conditions hold.
\begin{enumerate}
\item Every left unit lifts modulo every left principal ideal. \\
\item Every right unit lifts modulo every right principal ideal.\\
\end{enumerate}
\end{lemma}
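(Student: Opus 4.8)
The plan is to verify direct finiteness by hand: since $R$ is directly finite exactly when $ab=1$ forces $ba=1$, I would fix $a,b\in R$ with $ab=1$ and aim to produce a genuine (two-sided) inverse for $b$ in case (1), for $a$ in case (2). The opening move is to introduce the element $e:=1-ba$ and record, each as a one-line consequence of $ab=1$, the identities
\[
e+ba=1,\qquad ae=a-(ab)a=0,\qquad eb=b-b(ab)=0
\]
(so in particular $e^2=e$, although only the two annihilator relations $ae=0$, $eb=0$ will actually be used).

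Assume hypothesis (1). From $ba-1=-e\in Re$ we see that $a$ is a left unit modulo the principal left ideal $Re$, so by hypothesis it lifts: there is a left-invertible $w\in R$ with $w-a\in Re$, say $w=a+se$ for some $s\in R$. The key computation is $wb=ab+s(eb)=1+0=1$, which shows $w$ is also right-invertible; since an element that is simultaneously left- and right-invertible is a unit (if $zw=1$ and $wb=1$ then $z=z(wb)=(zw)b=b$, so $w^{-1}=b$), we conclude $b=w^{-1}\in U(R)$. Finally $ab=1$ together with $b\in U(R)$ gives $a=b^{-1}$, hence $ba=1$, as required.

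Under hypothesis (2) I would run the mirror argument on the right. Now $ba-1=-e\in eR$ exhibits $b$ as a right unit modulo the principal right ideal $eR$, so lifting produces a right-invertible $w=b+es$; the companion computation $aw=ab+(ae)s=1+0=1$ forces $w$ to be left-invertible as well, hence a unit with $w^{-1}=a$. Then $a\in U(R)$ and $ab=1$ give $b=a^{-1}$, so again $ba=1$.

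The only genuinely load-bearing step is the choice of the idempotent $e=1-ba$ together with its one-sided annihilator identities: $ae=0$ is what gives $1\in Ra+Re$ (respectively $1\in bR+eR$), which is precisely the hypothesis needed to invoke the lifting property, while $eb=0$ is what makes the lifted element $w$ satisfy $wb=1$ (respectively $aw=1$) on the nose, upgrading it from merely one-sided invertible to a genuine unit. Once those relations are in place the rest is routine manipulation of $ab=1$, and in particular no stable-range input from the introduction is needed for this lemma.
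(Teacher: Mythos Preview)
Your proof is correct and follows essentially the same route as the paper: set $e=1-ba$, observe that $a$ is a left unit modulo $Re$ since $ba-1=-e\in Re$, lift to a left-invertible $w\in a+Re$, and use $(1-ba)b=0$ to get $wb=1$, whence $w$ is a two-sided unit with inverse $b$. The paper's argument is identical up to notation (it writes $a-u=x(1-ba)$ and computes $(a-u)b=0$), so there is no substantive difference to report.
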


\begin{proof}
Suppose (1) holds. We claim $R$ is directly finite.  \\
Choose  $a,b \in R$ such that $ab = 1$ and consider the left principal ideal $R(1-ba)$. Now $(ba - 1) \in R(1-ba)$, and therefore, $a$ is a left unit modulo the left principal ideal $R(1-ba)$. Hence, by our hypothesis there exists a left unit $u \in R$ such that 
\begin{center}
$a-u\in R(1-ba)$
\end{center}
Since $u$ is a left unit in $R$, there exists an element $ v \in R$ such that  $vu = 1$.\\ Let $a-u = x(1-ba)$. Multiplying on the right by $b$ we have

			{
	\setlength{\abovedisplayskip}{2.5pt}
	\setlength{\belowdisplayskip}{2.0pt}
	\begin{alignat*}{3}
		&(a - u)b &&= x(1-ba)b \\
		\implies&  ab- ub &&= xb - xbab\\ 
		\implies& 1 - ub &&= 0
\end{alignat*}
}
	
\noindent Hence  $ub = 1$ and  therefore, $u$ being both left and right invertible, is a unit. \\Then, $v = b \in U(R)$ and consequently,  $ ba = 1$.\\
Thus, $R$ is directly finite. \\
The proof traverses in a similar way if we assume (2).\\
\end{proof}

\bigskip 

\noindent We now proceed to strengthen  the above lemma in the next theorem.
\bigskip 

\begin{theorem}
Let R be a ring. Then the following are equivalent:\\
\begin{enumerate}
\item R has stable range one.\\
\item Every left unit lifts modulo every left principal ideal.\\
\item Every right unit lifts modulo every right principal ideal.\\
\end{enumerate}

\end{theorem}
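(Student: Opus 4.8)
The plan is to prove the equivalences by establishing the cycle $(1)\Rightarrow(2)\Rightarrow(3)\Rightarrow(1)$, or more naturally by proving $(1)\Leftrightarrow(2)$ and then invoking the left-right symmetry. Since $sr(R)=1$ is known to be left-right symmetric (Vasershtein), once we show $(1)\Leftrightarrow(2)$ we immediately get $(1)\Leftrightarrow(3)$ by applying the first equivalence to the opposite ring, so the real content is the single equivalence $sr(R)=1 \iff$ every left unit lifts modulo every left principal ideal.

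For the direction $(1)\Rightarrow(2)$, suppose $sr(R)=1$ and let $u$ be a left unit, say $vu=1$, and let $I=Rc$ be a left principal ideal; we must find $w$ with $wu-1\in Rc$. The idea is to work modulo $Rc$: we want to "correct" $v$ by an element of $Rc$ so that it becomes a genuine unit, or at least a left inverse of $u$ that is itself a unit. Here I would use the stable range one condition in the form that lets us convert a unimodular-type relation into a unit. Starting from $vu=1$, note $vu + 0 = 1$ trivially is too weak; instead I would look at the relation coming from $u$ and $c$. The natural relation is: since $vu=1$, we have $v u + (\text{something involving } c)=1$ only if we set things up correctly — one clean approach is to consider that $uv$ is an idempotent $e$ with $u=eu$, $v=ve$, and then use $sr(R)=1$ on the corner or on the element $uv+(1-uv)$. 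Actually the cleanest route: from $vu=1$, the element $uv=e$ is idempotent; $sr(R)=1$ passes to the statement that $e R e$ has stable range one and more usefully that one can lift. I expect the slick argument is: $sr(R)=1$ implies $R$ is an "exchange-like" ring with respect to lifting units, concretely that for any $a,b$ with $Ra+Rb=R$ one can adjust. Apply this with the pair $u$ and $c$ noting $Ru + Rc \supseteq Rvu = R$, so $Ru+Rc=R$, i.e. there exist $s,t$ with $su+tc=1$; then $sr(R)=1$ gives $s+tc x = w$ a unit for some $x$, whence $wu = su + (tcx)u = 1 - tc + tcxu \equiv 1 \pmod{Rc}$, and $w\in U(R)$ certainly lifts $u$ modulo $Rc$. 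That is the key computation.

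For the harder direction $(2)\Rightarrow(1)$, assume every left unit lifts modulo every left principal ideal; we want $sr(R)=1$. Take $a,b\in R$ with $Ra+Rb=R$, say $sa+tb=1$ for some $s,t$ — wait, stable range is usually stated as $ax+b=1$; so assume $ax+b=1$ and we must produce $y$ with $a+by\in U(R)$. The element $ax+b=1$ tells us... I would first extract a left unit from the hypothesis's reach. Observe that from $ax+b=1$, the element $u := ?$ needs to be a left unit so we can apply (2). A promising candidate: note Lemma 1 (the Vasershtein–Goodearl lemma) lets us pass between left and right corrections, and Lemma 2 already gives directly finite. The strategy: show $a+by$ is a left unit for a suitable $y$ first (this should be elementary from $ax+b=1$: indeed $x a' + \cdots$), then use (2) to lift it modulo the principal ideal it "almost generates," then use direct finiteness (Lemma in the excerpt, which follows from (2)) to upgrade the left inverse to a two-sided inverse. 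So the chain is: $(2)\Rightarrow$ (directly finite) and $(2)\Rightarrow$ (every left unit is liftable mod principal ideals) $\Rightarrow$ (every left unit that is "unimodular-adjacent" is actually a unit) $\Rightarrow sr(R)=1$.

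The main obstacle I anticipate is the direction $(2)\Rightarrow(1)$: specifically, taking a general stable-range relation $ax+b=1$ and manufacturing from it a genuine \emph{left unit} $u$ and a principal ideal $Rc$ such that lifting $u$ modulo $Rc$ translates back into the invertibility of $a+by$. The delicate point is choosing $c$ correctly — it should be something like $c = 1-(a+by_0)z$ for an initial guess $y_0$ — and then checking that the lifted element $w$, combined with direct finiteness from Lemma~(the first lemma), genuinely yields a two-sided inverse of $a+by$ rather than merely a one-sided one. I would handle this by first reducing via Lemma~1 to a right-hand correction, keeping careful track of which one-sided inverses are available, and only at the end invoking directly finite to close the gap; the bookkeeping of which side each inverse lives on is where errors are most likely, so I would write that part out in full rather than leaving it as routine.
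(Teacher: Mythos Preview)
Your proposal rests on a misreading of condition~(2). In the paper, ``every left unit lifts modulo $Rc$'' means: whenever $b$ is a left unit \emph{modulo} $Rc$ (i.e.\ $ab-1\in Rc$ for some $a\in R$), there is a genuine left unit $u\in R$ with $b-u\in Rc$. You instead start from an element $u$ that is \emph{already} a left unit of $R$ (with $vu=1$) and look for $w$ with $wu-1\in Rc$; but then $w=v$ already works since $vu-1=0$, so your formulation of (2) holds trivially in every ring and cannot possibly be equivalent to stable range one. Even ignoring the interpretation, your displayed computation for $(1)\Rightarrow(2)$ is wrong: from $su+tc=1$ and $w=s+tcx\in U(R)$ you write $wu-1=tc(xu-1)\in Rc$, but $Rc$ is only a \emph{left} ideal and need not absorb right multiplication, so $tc(xu-1)$ has no reason to lie in $Rc$.

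With the correct reading both directions are almost immediate from the two lemmas already in hand, and your sketch misses the clean route. For $(1)\Rightarrow(2)$: if $ab-1=rc$ then $ab+(-rc)=1$, and Lemma~1 (Vasershtein) gives $t$ with $b+t(-rc)=u\in U(R)$, so $b-u=trc\in Rc$. For $(2)\Rightarrow(1)$: given $ab+c=1$ one has $ab-1\in Rc$, so $b$ itself is a left unit modulo $Rc$; condition~(2) yields a left unit $u$ with $b-u\in Rc$, Lemma~2 forces $u\in U(R)$, and writing $b-u=xc$ gives $b+(-x)c\in U(R)$, whence $sr(R)=1$ by Lemma~1. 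Your outline of $(2)\Rightarrow(1)$ circles around this but never identifies $b$ as the element to feed into hypothesis~(2), which is precisely the step that makes the argument work.
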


\begin{proof}
$(1) \implies (2):$ Let $R$ have stable range one. Choose $a,b,c \in R$ such that $ab - 1 \in Rc$, that is, $b$ is a left unit modulo the left principal ideal $Rc$. We prove that $b$ lifts to a left unit modulo the left principal ideal $Rc$.\\
Let $x \in R$ such that $ab - 1 = xc$. Then $ab - xc = 1$. \\
Since $R$ has stable range one, we must have $a +(-xc)y \in U(R)$ for some $y\in R$. Then from Lemma  3 there exists $t \in R, u \in U(R)$ such that 
			{
	\setlength{\abovedisplayskip}{2.5pt}
	\setlength{\belowdisplayskip}{2.0pt}
	\begin{alignat*}{3}
	&b + t(-xc )&&= u \\
	\implies& b-u &&= txc
	\end{alignat*}
}
Therefore $b - u \in Rc$ where $u$ is invertible (and hence left invertible) in $R$. \\
Hence, $b$ lifts to a left unit $u$, modulo the left prinicpal ideal $Rc$.\\

$(2) \implies (1)$: Assume every left unit lifts modulo every left principal ideal. \\
We prove that $R$ has stable range one.\\
\noindent Let $a,b,c \in R$ such that $ab + c = 1$. Then $ab-1  \in Rc$ which shows that $b$ is a left unit modulo the left principal ideal $Rc$. Hence, by our hypothesis, there exists a left unit $u \in R$ such that $b-u \in Rc$. \\
Now $R$ being directly finite from Lemma 3 forces every  left unit to be  invertible in $R$. Thus $u \in U(R)$. \\
Choose $s \in R$ such that $b-u = sc$. \\
Then, $ b + (-s)c = u \in U(R)$. Therefore from Lemma 2, we have 
\begin{center}
$a+cx \in U(R)$
\end{center}
Hence  $R$ has stable range one.
\bigskip 

$(1) \Leftrightarrow (3)$: The proof follows similarly.
\end{proof}
 \bigskip

\noindent  Since rings with stable range one are directly finite, Theorem 4 gives a stronger answer to Question 1.

\bigskip


In his classic paper \cite{Ka}, Kaplansky considered several examples where a ring $R$ satisfies the property of being right uniquely generated and where it is not. He remarked that for commutative rings, the property holds true for principal ideal rings and artinian rings. Furthermore, Hartwig \cite{hart} showed that all unit-regular rings are right uniquely generated. For a counterexample in the general case,  the ring of all linear transformations over an infinite dimensional vector space was illustrated by Kaplansky as a case in point. This ring is clearly von Neumann regular but not unit-regular, and consequently not stable range one. 
 Now it is known that if a ring $R$ is von Neumann regular then $R$ has stable range one if and only if it is unit- regular. Reducing the assumptions from von Neumann regular rings to (left) right quasi-morphic rings and unit-regular rings to (left) right uniquely generated rings it is quite natural to check if the ring $R$  still satisfies stable-range one ?\\

We first strengthen Nicholson's assertion and prove that the left (right) uniquely generated property is  equivalent to Bass's stable range  on left (right) quasi-morphic rings.  
We   consider the following proposition. 
\begin{proposition}
		Let $R$ be a ring. If $R$ is left (right) uniquely generated, then $R$ is directly finite. 
\end{proposition}
\noindent But $R$  need not have stable range one. Consider $R= \mathbb Z$. Then R is both left and right uniquely generated but $R$ does not have stable range one. \\
Recall that Camillo and Nicholson \cite{NiCa} defined a ring $R$ to be {\it left quasi-morphic} if the collection of all principal left ideals in $R$ coincides with the collection of all principal left  annihilators in $R$, that is, for every element $a \in R$, there exists $b,c \in R$ such that $Ra = ann_l(b)$ and $ann_l(a) = Rc$. Similarly, it is defined for {\it right quasi-morphic} rings.  Yang \cite{yang} generalized this notion of {\it left (right) quasi-morphic} rings and defined {\it left (right) pseudo-morphic} rings where every left (right) principal ideal is a left (right) annihilator of some element in the ring. \\
In the next theorem we will prove Nicholson's second question in the affirmative by showing the equivalency of left  uniquely generated property with stable range one for  {\it left  pseudo-morphic} rings. \\

\begin{theorem}
	Let $R$ be a ring. If $R$ is left pseudo-morphic, then the following are equivalent :
	\begin{enumerate}
		\item $R$ is left uniquely generated.\\
		\item $R$ has stable range one.\\
	\end{enumerate}
\end{theorem}

\begin{proof}
	$(1) \implies (2)$ :  In view of Theorem 4, it suffices to show that  every left unit lifts modulo every left principal ideal in $R$.\\
	Let $x$ be a left unit modulo the left principal ideal $Ry$ i.e there exists  $z \in R$ such that $zx - 1 \in Ry$. Let $zx - 1 =ky$. We would like to show that there exists a left unit $u$ such that $x-u \in Ry$.\\
	\noindent Since $R$ is  {\it left pseudo-morphic}, there exists $a,b \in R$ such that $Ry = ann_l(a)$ and $R(xa) = ann_l(b)$. Now,
		{
		\setlength{\abovedisplayskip}{2.5pt}
		\setlength{\belowdisplayskip}{2.0pt}
		\begin{alignat*}{3}
		&zx - 1 &&=ky\\
	\implies	&zx - ky &&= 1 \\
	\implies&Rx + Ry &&= R
		\end{alignat*}
	}
	\noindent But for any $r\in R$, since $rxa \in R(xa) = ann_l(b)$ we  have 
			{
		\setlength{\abovedisplayskip}{2.5pt}
		\setlength{\belowdisplayskip}{2.0pt}
		\begin{alignat*}{3}
		rx(ab) &=  (rxa)b \\
		&= 0
		\end{alignat*}
	}
\noindent Therefore, 	$Rx \subseteq ann_l(ab)$. \\

\noindent Similarly, since $ry \in Ry = ann_l(a)$ we have 
			{
	\setlength{\abovedisplayskip}{2.5pt}
	\setlength{\belowdisplayskip}{2.0pt}
	\begin{alignat*}{3}
	ry(ab) &=  ((ry)a)b \\
	 &=  0.b \\
	 &= 0
	\end{alignat*}
}
\noindent Therefore, 	$Ry \subseteq ann_l(ab)$. 

	\noindent Hence, we must have,
				{
		\setlength{\abovedisplayskip}{2.5pt}
		\setlength{\belowdisplayskip}{2.0pt}
		\begin{alignat*}{3}
	&R &&= Rx + Ry \subseteq  ann_l(ab)\\
	 \implies &ab &&= 0 \\
	 \implies &a &&\in ann_l(b)  \\
	\implies  &Ra &&\subseteq ann_l(b)
		\end{alignat*}
	}
But, $ann_l(b) = R(xa) \subseteq Ra$.  \\
Therefore $ann_l(b) = Ra $. \\
But since, $ann_l(b)= R(xa)$ we must have, $R(xa) = Ra$.\\
	\noindent Now  $R$ being left uniquely generated and $R(xa) = Ra$, there exists a unit $u \in R$ such that 
	{
		\setlength{\abovedisplayskip}{2.5pt}
		\setlength{\belowdisplayskip}{2.0pt}
		\begin{alignat*}{3}
		&xa &&= ua\\
		\implies &(x-u)a &&= 0 \\
		\implies &(x-u) &&\in ann_l(a) = Ry 
		\end{alignat*}
	}
Thus, the left unit $x$ lifts to an invertible element $u$ modulo the left principal ideal $Ry$. Hence, from Theorem 4, the ring $R$ has stable range one. \\
	
	\noindent $(2) \implies (1)$: Let $Ra$ and $Rb$ be any two left principal ideals in $R$ such that $Ra = Rb$. Then there exists $x,y \in R$ such that $a = xb$ and  $b = ya$. Therefore, 
		{
		\setlength{\abovedisplayskip}{2.5pt}
		\setlength{\belowdisplayskip}{2.0pt}
		\begin{alignat*}{3}
		b &= yxb\\
		\implies (1-yx)b &= 0 \\
		 \implies (1-yx) &\in ann_l(b)
		\end{alignat*}
	}
	\noindent Now, $yx + (1-yx) = 1$. Since $R$ is assumed to have stable range one, there exists an $s  \in R$ such that $y + (1-yx)s \in U(R)$. Then, by Lemma 1,  there exists $t\in R$ such that 
\begin{center}
$x + t(1-yx) = u\in U(R)$
\end{center}
 Since $(1-yx) \in ann_l(b)$ we have 
 		{
 	\setlength{\abovedisplayskip}{2.5pt}
 	\setlength{\belowdisplayskip}{2.0pt}
 	\begin{alignat*}{3}
 	(x + t(1-yx))b &= ub\\
 	\implies xb &= ub 
 	\end{alignat*}
 }
where $xb = a$. Thus $a = ub$ for some $u \in U(R)$ which completes the proof.\\
\end{proof}
\bigskip
\begin{remark}
	Note that $(2) \implies (1)$ does not require $R$ to be {\it left pseudo-morphic}. Thus a ring R with stable range one is both left and right uniquely generated.  
\end{remark}

\begin{theorem}
	Let $R$ be a ring. If $R$ is right pseudo-morphic, then the following are equivalent :
\begin{enumerate}
	\item $R$ is right uniquely generated.\\
	\item $R$ has stable range one.\\
\end{enumerate}
\end{theorem}

\begin{proof}
	The proof follows similarly as above. 
	\end{proof}

\bigskip 
  Bass showed that every semi-local ring $R$ has stable range one. This was later generalized independently by Fuchs \cite{Fu2}, Kaplansky \cite{Ka2} and Henriksen \cite{He} where they showed that every unit-regular ring has stable range one.  In \cite{goodearl}, there is a proof attributed to Kaplansky, that a von Neumann regular ring R is unit regular if and only it has stable range one. Later Camillo and Yu \cite{camyu} extended this result to exchange rings and proved that in an exchange ring $R$, every von Neumann regular element is unit regular if and only if $R$ has stable range one. And, from above, a ring with stable range one is both left and right uniquely generated.  The following quick consequence of the above theorems shows us that  Kaplansky's   left (right) uniquely generated  property is symmetric for pseudo-morphic rings.  

\begin{corollary}
Let $R$ be a ring. If $R$ is  pseudo-morphic, then the following are equivalent :
\begin{enumerate}
\item $R$ is left uniquely generated.\\
\item $R$ has stable range one. \\
\item $R$ is right uniquely generated.\\
\end{enumerate}
\end{corollary}

\bigskip
\bigskip
\section*{Acknowledgement}
\bigskip
\noindent  I would like to thank  Professor Ashish K Srivastava for his help and  encouragement with this work.

\bigskip

\bigskip
\bigskip

\bigskip

\bigskip

\end{document}